\newtheorem{theorem}{Theorem}[section]
\newtheorem{proposition}[theorem]{Proposition}
\newtheorem{question}[theorem]{Question}
\newtheorem{lemma}[theorem]{Lemma}
\theoremstyle{definition}
\newtheorem{definition}[theorem]{Definition}
\def\Z{{\mathbb Z}} 
\def\N{{\mathbb N}} 
\def\cal{\mathcal}
\newcommand\tab[1][1cm]{\hspace*{0.6cm}}
\theoremstyle{remark}
\newtheorem{rem}[theorem]{Remark}
\title{On Induced Subgraphs of the Hamming Graph}
\author{Dingding Dong\footnote{University of Chicago. Email: ddong@math.harvard.edu.}}
\date{October 15, 2020}
\begin{document}
\maketitle


\begin{abstract} 
In connection with his solution of the Sensitivity Conjecture,
Hao Huang (arXiv: 1907.00847, 2019) asked the following question:
Given a graph $G$ with high symmetry, what can we say about
the smallest maximum degree of induced subgraphs of $G$ with
$\alpha(G)+1$ vertices, where $\alpha(G)$ denotes the size of
the largest independent set in $G$\,?  We study this question
for $H(n,k)$, the $n$-dimensional Hamming graph over an alphabet
of size $k$. Generalizing a construction by Chung et al. (JCT-A, 1988), we prove that $H(n,k)$ has an induced subgraph with more than
$\alpha(H(n,k))$ vertices and maximum degree at most 
$\lceil \sqrt{n}\rceil$. Chung et al. proved this statement for 
$k=2$ (the $n$-dimensional cube). 
\end{abstract}


\section{Introduction}

For a graph $G=(V,E)$, let $\alpha(G)$ denote the independence number of $G$ (the maximum size of an independent set). In this paper we study the quantity $f(G)$ defined as the smallest maximum degree of induced subgraphs of $G$ with $\alpha(G)+1$ vertices. The Hamming graph $H(n,k)$ is a graph on the vertex set 
$\Sigma^n$  
where 
$\Sigma$   
is an alphabet of size $k$, such that two vertices are adjacent if and only if they differ in precisely one coordinate.

Notice that $H(n,2)=Q^n$ is the $n$-dimensional cube. The study of $f(Q^n)$ goes back to a 1988 paper by Chung, F\"uredi, Graham and Seymour \cite{Chung}, who\footnote{In this paper, $\log$ refers to base-2 logarithms.} proved $\frac{1}{2}(\log n-\log\log n+1)<f(Q^n)\leq \lceil\sqrt{n}\rceil$. Their proof of the lower bound used the connection between $f(Q^n)$ and the sensitivity of Boolean functions. Gotsman and
Linial \cite{Gotsman} made a significant further step in formalizing this connection. They showed that the inequality $f(Q^n)\geq n^c$ for some constant $c>0$ is equivalent to the Sensitivity Conjecture for Boolean functions, proposed by Nisan and Szegedy \cite{Nisan}. (See the survey by Hatami et al. \cite{Hatami} on the Sensitivity Conjecture.)

Hao Huang \cite{Huang} recently proved that $f(Q^n)\geq \lceil\sqrt{n}\rceil$. This lower bound, according to the result by Gotsman and Linial, confirmed the Sensitivity Conjecture for Boolean functions. In the last section of his paper, Huang asks the following question.

\begin{question}[Huang]  \label{huang-question} 
What can we say about $f(G)$ for graphs $G$ with high symmetry?
\end{question}

We study this question for the Hamming graphs. Generalizing the proof of the inequality $f(Q^n)\leq \lceil\sqrt{n}\rceil$ by Chung et al. \cite{Chung}, in this note we prove the following bound.

\begin{theorem}  \label{thm:main}  
For all $k,n\geq 1$, we have $f(H(n,k))\leq\lceil \sqrt{n}\rceil$. In fact, $H(n,k)$ has a bipartite induced subgraph with maximum degree at most $\lceil \sqrt{n}\rceil$ and more than $\alpha(H(n,k))$ vertices.
\end{theorem}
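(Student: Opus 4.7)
Let $m = \lceil\sqrt n\rceil$. The plan is to reduce the problem to $H(m,k)$ via a block decomposition. Partition $[n]$ into $m$ blocks $B_1,\dots,B_m$ each of size at most $m$, and define the block-sum projection $\psi\colon [k]^n \to [k]^m$ by $\psi(v)_i \equiv \sum_{j\in B_i} v_j \pmod k$. Two elementary observations drive the argument: (a) each fiber $\psi^{-1}(\tau)$ is an independent set of $H(n,k)$, since changing one coordinate modifies exactly one block sum, so two vertices in the same fiber cannot differ in a single coordinate; and (b) whenever $\tau\sim \tau'$ in $H(m,k)$ differ in block $i$, each $v\in \psi^{-1}(\tau)$ has exactly $|B_i|\leq m$ neighbors in $\psi^{-1}(\tau')$, corresponding to the $|B_i|$ choices of which coordinate in $B_i$ to alter.

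Suppose I can find a subset $T \subseteq [k]^m$ of maximum induced degree at most $1$ in $H(m,k)$ with $|T| > k^{m-1}$. Set $S := \psi^{-1}(T)$. By (b), every $v\in S$ satisfies $\deg_S(v) = \sum_{\tau\in T,\, \tau\sim\psi(v)}|B_{i(\tau)}| \leq m$; the size formula gives $|S| = |T|\cdot k^{n-m} > k^{n-1} = \alpha(H(n,k))$; and because $T$ (as an induced subgraph of $H(m,k)$) is a disjoint union of matched edges and isolated vertices, the set $S$ inherits a bipartition: for each edge $\{\tau,\tau'\} \subseteq T$, place $\psi^{-1}(\tau)$ and $\psi^{-1}(\tau')$ on opposite sides, and assign each isolated fiber arbitrarily.

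For $k \geq 3$ such a $T$ can be built directly using matching-plus-isolated-vertex structures in $H(m,k)$. For example, in $H(2,k)$ the set $\{(0,0),(0,1),(1,2),(2,2),(3,3),\ldots,(k-1,k-1)\}$ has size $k+1>k=k^{m-1}$ and induced maximum degree~$1$; for $m \geq 3$ one can take a maximum independent set of $H(m,k)$ (the sum-$0$ coset), remove a small subset $R$, and add a slightly larger piece $A$ from the sum-$1$ coset, since the bipartite graph between the two cosets is $m$-regular and for $k\geq 3$ has enough slack ($k^{m-1}/m \geq 1$) to achieve a net gain of at least one vertex while maintaining the matching structure.

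The main obstacle is the case $k=2$: since $H(m,2)=Q^m$ is bipartite, a short counting argument shows that any induced subgraph of $Q^m$ of maximum degree at most $1$ has at most $2^{m-1}=k^{m-1}$ vertices, so no $T$ as above exists and the naive block construction saturates exactly at $\alpha(H(n,k))$. In this case I would invoke the original Chung et al.\ argument directly, taking $S=(I\setminus R)\cup A$, where $I$ is the maximum independent set of even-parity vertices of $Q^n$, $R \subseteq I$, and $A \subseteq [2]^n\setminus I$ are chosen so that $|A|>|R|$, each $a\in A$ has at least $n-m$ neighbors in $R$, and each $w\in I\setminus R$ has at most $m$ neighbors in $A$. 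The delicate combinatorial step of designing such $(R,A)$ for all $n$ can be handled by a ``path-and-midpoints'' pattern, with $R$ a sequence of even-weight vertices at mutual Hamming distance~$2$ forming a path and $A$ consisting of the shared adjacent vertices, extended and combined with the block structure for larger $n$.
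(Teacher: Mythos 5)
Your reformulation is clean and partly correct: the block-sum projection $\psi\colon \Z_k^n \to \Z_k^m$, the observation that fibers are independent, and the degree computation on fibers are all accurate, and they reduce the problem to finding a set $T \subseteq \Z_k^m$ of size $> k^{m-1}$ that induces a subgraph of maximum degree at most $1$ in $H(m,k)$ (a ``dissociation set'' larger than an independent set). But this reduction trades the theorem for a \emph{much stronger} and unestablished claim. A dissociation set $T$ with $|T| > k^{m-1}$ in $H(m,k)$ would, by itself, show $f(H(m,k)) \leq 1$; you would not even need the projection back to $H(n,k)$. For $k=2$ you correctly note this is false. For $k \geq 3$ you assert it holds for all $m$, but you prove it only for $m \leq 2$; for $m \geq 3$ your justification is the phrase ``enough slack ($k^{m-1}/m \geq 1$) to achieve a net gain,'' which is not an argument. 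Having enough room to place $|R|$ vertices does not produce a pair $(R,A)$ satisfying the three simultaneous local constraints (each $a\in A$ has at least $m-1$ of its $m$ sum-$0$ neighbors in $R$; each $w\in I_0\setminus R$ has at most one neighbor in $A$; $|A|>|R|$). The counting you cite only shows the necessary condition $|R| \ge m-1$, not sufficiency. The claim $f(H(m,k))\leq 1$ for all $m$ and $k\geq 3$ may well be true, but it is a genuinely open-looking statement and certainly not something a one-line slack estimate establishes, and the fact that your approach ``proves'' something dramatically stronger than the target $\lceil\sqrt n\rceil$ bound should itself have been a warning sign. For $k=2$ you fall back on ``invoke the Chung et al. argument directly'' and describe a ``path-and-midpoints pattern'' without carrying it out, so that branch is also unproved. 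The paper avoids this trap entirely: it does not seek a degree-$1$ set in a smaller Hamming graph, but instead directly splits $\Z_k^n$ into $X$ (vertices with a whole block in $v^{-1}(0)$) and $Y$, refines each by coordinate sum into $k$ independent pieces $X_i$, $Y_i$, shows that any $X_{i_1}\cup Y_{i_2}$ has maximum degree at most $\max(q,\max_j|F_j|)\leq\lceil\sqrt n\rceil$, and then uses the inclusion--exclusion congruence $|X|\not\equiv 0\pmod k$ plus pigeonhole to find a pair with $|X_{i_1}|+|Y_{i_2}|>k^{n-1}$, getting exactly the $\lceil\sqrt n\rceil$ bound and no more.
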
    


\section{Preliminaries and notation}

For a graph $G=(V,E)$, define $\Delta(G)$ to be the maximum degree of $G$. For a subset $W\subseteq V$, define $G[W]$ to be the induced subgraph of $G$ on vertex set $W$. It will be convenient to take $\Z_k=\Z/k\Z$ as the alphabet for the Hamming graph, so the set of vertices of $H(n,k)$ is $\Z_k^n$\,. We view the elements $v\in\Z_k^n$ as functions from $[n]=\{1,\dots,n\}$ to $\Z_k$ and set $v=(v(1),\dots,v(n))$. It is not hard to see that for all $k,n\geq 1$, we have $\alpha(H(n,k))=k^{n-1}$. 


\section{Constructing a family of induced subgraphs}\label{sec:construction}

In this section, we construct a family $\cal A$ of $k^2$ induced bipartite subgraphs of $H(n,k)$. In Section~\ref{sec:proof} we show that (a) each member of $\cal A$ is bipartite (Proposition~\ref{prop:bipartite}), (b) each member of $\cal A$ has maximum degree at most $\lceil \sqrt{n}\rceil$ (Proposition~\ref{prop:degreebound}), and (c) at least one member of $\cal A$ has more than $\alpha(H(n,k))$ vertices (Proposition~\ref{prop:large}). Theorem~\ref{thm:main} is an immediate consequence of these statements.

Following Chung et al. \cite{Chung}, we note that for all $n\in\N$, there exists a partition $[n]=F_1\overset{\cdot}{\cup}\dots \overset{\cdot}{\cup} F_q$ such that $|q-\sqrt{n}|<1$ and each $||F_j|-\sqrt{n}|<1$. We fix such a partition for the rest of this paper and use it to define partitions of $\Z_k^n$, the set of vertices.

\begin{definition}   \label{def:partition1}
We partition the vertex set of $H(n,k)$ as
$$\Z_k^n=X\overset{\cdot}{\cup} Y$$
where
\begin{equation}
X:=\left\{v\in \Z_k^{n}: F_j\subseteq v^{-1}(0)\text{ for some }j\in[q]\right\},
\end{equation}
\begin{equation}Y:=\left\{v\in \Z_k^{n}: F_j\not\subseteq v^{-1}(0)\text{ for all }j\in[q]\right\}.
\end{equation}
\end{definition}

\begin{definition}   \label{def:partition2}
We further partition $X$ and $Y$ as
$$X=X_0\overset{\cdot}{\cup}X_1\overset{\cdot}{\cup}\dots\overset{\cdot}{\cup}X_{k-1},$$
$$Y=Y_0\overset{\cdot}{\cup}Y_1\overset{\cdot}{\cup}\dots\overset{\cdot}{\cup}Y_{k-1}$$
where
\begin{equation}
X_i:=X\cap \left\{v\in\Z_k^n:\sum_{\ell=1}^nv(\ell)=i\right\},
\end{equation}
\begin{equation}
Y_i:=Y\cap \left\{v\in\Z_k^n:\sum_{\ell=1}^nv(\ell)=i\right\}.
\end{equation}
\end{definition}

We consider the following family of $k^2$ induced subgraphs of $H(n,k)$.
\begin{equation}
\mathcal{A}:=\left\{H(n,k)[X_{i_1}\cup Y_{i_2}]:i_1,i_2\in\Z_k\right\}.
\end{equation}


\section{Analysis of the construction}\label{sec:proof}

In this section, we prove properties (a), (b) and (c) of the family $\cal A$ indicated in the first paragraph of Section~\ref{sec:construction}. We first address (a).

\begin{proposition}~\label{prop:bipartite}
Each of the sets $X_i$, $Y_i$ $(i\in\Z_k)$ is independent in $H(n,k)$. In particular, for all $i_1,i_2\in\Z_k$, the induced subgraph $H(n,k)[X_{i_1}\cup Y_{i_2}]$ is bipartite.
\end{proposition}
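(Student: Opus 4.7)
The plan is to reduce the independence of $X_i$ and $Y_i$ to a simple observation about the coordinate sum modulo $k$, and then derive bipartiteness as an immediate consequence.

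First I would recall what adjacency means in $H(n,k)$: two vertices $u,v\in\Z_k^n$ are adjacent precisely when there is a unique coordinate $\ell\in[n]$ with $u(\ell)\neq v(\ell)$ and $u(m)=v(m)$ for all $m\neq\ell$. The key invariant to track is the coordinate sum $\sigma(v):=\sum_{\ell=1}^{n}v(\ell)\in\Z_k$. If $u$ and $v$ are adjacent and differ only at position $\ell$, then $\sigma(u)-\sigma(v)=u(\ell)-v(\ell)$, and this element of $\Z_k$ is nonzero because $u(\ell)\neq v(\ell)$ in $\Z_k$. Consequently, any two adjacent vertices of $H(n,k)$ have distinct coordinate sums modulo $k$.

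Next I would apply this invariant to the sets in Definition~\ref{def:partition2}. By construction, every vertex in $X_i$ satisfies $\sigma(v)=i$, and likewise every vertex in $Y_i$ satisfies $\sigma(v)=i$. Hence no two vertices in $X_i$ can be adjacent, and no two vertices in $Y_i$ can be adjacent; that is, each $X_i$ and each $Y_i$ is independent in $H(n,k)$. This is the substantive content of the proposition.

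Finally I would deduce bipartiteness of $H(n,k)[X_{i_1}\cup Y_{i_2}]$. Since $X\cap Y=\emptyset$ by Definition~\ref{def:partition1}, the sets $X_{i_1}$ and $Y_{i_2}$ are disjoint, and they each are independent by the previous step. Therefore every edge of the induced subgraph $H(n,k)[X_{i_1}\cup Y_{i_2}]$ must have one endpoint in $X_{i_1}$ and the other in $Y_{i_2}$, which is exactly what it means for this subgraph to be bipartite with parts $X_{i_1}$ and $Y_{i_2}$. I do not anticipate any real obstacle; the only subtle point to verify cleanly is that $u(\ell)-v(\ell)\neq 0$ in $\Z_k$ when $u(\ell)\neq v(\ell)$, which is true because these are distinct elements of the additive group $\Z_k$.
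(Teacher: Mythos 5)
Your proof is correct and follows the same approach as the paper: the coordinate sum modulo $k$ is invariant across each $X_i$ and $Y_i$, while adjacent vertices in $H(n,k)$ differ in exactly one coordinate and thus have distinct coordinate sums, so each part is independent and the induced subgraph on $X_{i_1}\cup Y_{i_2}$ is bipartite. You have simply spelled out the one-line argument the paper gives in more explicit detail.
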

\begin{proof}This follows from the fact that the coordinates of vertices in each of the sets $X_{i}$, $Y_i$ have the same sum.
\end{proof}


\subsection{Maximum degree bound}
We prove that for all $i_1,i_2\in\Z_k$, the maximum degree of $H(n,k)[X_{i_1}\cup Y_{i_2}]$ is at most $\lceil \sqrt{n}\rceil$. First notice that if $v\in X$ has a neighbor in $Y$, then $v^{-1}(0)$ contains exactly one of $F_1,\dots,F_q\,$. By contradiction, if $v^{-1}(0)$ contains more than one of $F_1,\dots,F_q\,$, then any neighbor of $v$ still contains some $F_j$ and therefore cannot be in $Y$. This allows us to make the following definition. 

\begin{definition}
For any $v\in X$ with a neighbor in $Y$, define $j(v)$ to be the unique $j\in[q]$ such that $F_j\subseteq v^{-1}(0)$.
\end{definition}

\begin{proposition}    \label{prop:tech2}  
 Fix $v_2\in Y$. If $v_1,v_1'\in X$ are neighbors of $v_2$ and $j(v_1)=j(v_1')$, then $v_1=v_1'\,$. In particular, each vertex in $Y$ has at most $q$ neighbors in $X$.
\end{proposition}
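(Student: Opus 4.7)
The plan is to show the contrapositive flavor: if $v_2$ has two distinct neighbors $v_1, v_1' \in X$ with the same $j$-value, we derive a contradiction. Fix $j := j(v_1) = j(v_1')$, so $F_j \subseteq v_1^{-1}(0)$ and $F_j \subseteq v_1'^{-1}(0)$. Since $v_2 \in Y$, by definition $F_j \not\subseteq v_2^{-1}(0)$, so there exists at least one coordinate $m \in F_j$ with $v_2(m) \neq 0$.

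Next I would pin down where $v_1$ and $v_2$ must differ. Let $\ell$ be the unique coordinate at which $v_1$ and $v_2$ differ (it is unique because they are Hamming-adjacent). For every $m \in F_j \setminus \{\ell\}$ we have $v_2(m) = v_1(m) = 0$. Combined with the existence of some $m \in F_j$ with $v_2(m) \neq 0$, this forces $\ell \in F_j$, and moreover $\ell$ is the \emph{unique} coordinate in $F_j$ where $v_2$ is nonzero. The identical argument applied to $v_1'$ produces its differing coordinate $\ell'$, which must also be this unique nonzero coordinate of $v_2$ inside $F_j$. Therefore $\ell = \ell'$.

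At this point $v_1$ and $v_1'$ both agree with $v_2$ off the single coordinate $\ell$, and both equal $0$ at $\ell$ (since $\ell \in F_j$ and $F_j$ lies in both $v_1^{-1}(0)$ and $v_1'^{-1}(0)$), so $v_1 = v_1'$. The "in particular" clause then follows immediately by pigeonhole: each neighbor $v_1 \in X$ of $v_2$ determines a value $j(v_1) \in [q]$, and distinct neighbors yield distinct values, capping the count at $q$.

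I do not expect a real obstacle here; the only subtle point is realizing that the single coordinate in which $v_2$ differs from a given $X$-neighbor is forced to lie inside $F_j$ and is forced to be the same coordinate across different neighbors, which is what the hypothesis $v_2 \in Y$ is doing for us. Once that is seen, uniqueness of the Hamming-adjacent coordinate closes the argument.
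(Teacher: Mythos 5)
Your proof is correct and follows the same route as the paper: both arguments identify the unique coordinate in $F_{j}$ where $v_2$ is nonzero, observe that any $X$-neighbor of $v_2$ with $j$-value $j$ must be obtained from $v_2$ by zeroing out exactly that coordinate, and conclude uniqueness (hence the count-by-$q$ pigeonhole bound).
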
  
\begin{proof}Suppose $v_1\in X$ is a neighbor of $v_2$. Since $F_{j(v_1)}\subseteq v_1^{-1}(0)$ and $F_{j(v_1)}\not\subseteq v_2^{-1}(0)$, there exists some $\ell_0\in F_{j(v_1)}$ such that $\left(v_2^{-1}(0)\cap F_{j(v_1)}\right)\overset{\cdot}{\cup}\{\ell_0\}=F_{j(v_1)}\,$. Hence $v_1$ is the only element of $$\left\{v\in\Z_k^n: v\sim v_2, \hspace{0.1cm}F_{j(v_1)}\subseteq v^{-1}(0)\right\},$$ which we obtain from $v_2$ by changing the image of $\ell_0$ from $v_2(\ell_0)$ to 0.
\end{proof}

\begin{proposition}    \label{prop:tech3} 
If $v_1\in X_{i_1}$ and $v_2\in Y_{i_2}$ are neighbors, then there 
exists $\ell_0\in F_{j(v_1)}$ such that the following hold.
\begin{enumerate} 
    \item $v_1(\ell_0)=0$;
    \item $v_2(\ell_0)=i_2-i_1\,$;
    \item $v_1(\ell)=v_2(\ell)$ for all $\ell\in[n]\setminus\{\ell_0\}$.
\end{enumerate}
In particular, for all $i_1,i_2\in\Z_k$, each vertex in $X_{i_1}$ has at most $\displaystyle \max_{j\in[q]}|F_j|$ neighbors in $Y_{i_2}$.
\end{proposition}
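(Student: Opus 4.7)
The plan is to locate the unique coordinate $\ell_0$ at which $v_1$ and $v_2$ differ, show it must lie in $F_{j(v_1)}$, and then read off properties (1), (2), (3) almost directly from the definitions of $X_{i_1}$, $Y_{i_2}$, and $j(v_1)$. The counting conclusion will then follow because an $\ell_0$ with these properties uniquely determines $v_2$ from $v_1$.

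First I would observe that, since $v_1 \sim v_2$ in $H(n,k)$, the vectors $v_1$ and $v_2$ agree on all but one coordinate; call that coordinate $\ell_0 \in [n]$. Because $v_1$ has a neighbor in $Y$, the index $j(v_1)$ is defined and $F_{j(v_1)} \subseteq v_1^{-1}(0)$. On the other hand, $v_2 \in Y$ means that $F_{j(v_1)} \not\subseteq v_2^{-1}(0)$, so there exists some $\ell \in F_{j(v_1)}$ with $v_1(\ell) = 0 \neq v_2(\ell)$. This $\ell$ is a coordinate where $v_1$ and $v_2$ disagree, and since they disagree at exactly one coordinate, $\ell = \ell_0$. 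This gives (1) and (3) immediately: $v_1(\ell_0) = 0$ and $v_1$ agrees with $v_2$ off $\ell_0$.

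For (2), I would use the condition on coordinate sums defining $X_{i_1}$ and $Y_{i_2}$. Since $v_1(\ell) = v_2(\ell)$ for all $\ell \neq \ell_0$, the equations $\sum_{\ell} v_1(\ell) = i_1$ and $\sum_{\ell} v_2(\ell) = i_2$ differ only at $\ell_0$, so $v_2(\ell_0) - v_1(\ell_0) = i_2 - i_1$ in $\Z_k$. Combined with (1) this gives $v_2(\ell_0) = i_2 - i_1$.

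Finally, for the "in particular" clause, I would argue that conditions (1)--(3) show that $v_2$ is completely determined by the single choice of $\ell_0 \in F_{j(v_1)}$: coordinate $\ell_0$ must take value $i_2 - i_1$ and every other coordinate copies $v_1$. Hence the number of neighbors of $v_1$ in $Y_{i_2}$ is at most $|F_{j(v_1)}| \le \max_{j \in [q]} |F_j|$; and when $v_1$ has no neighbor in $Y$ at all, the bound holds vacuously. Honestly, there is no significant obstacle here — the content of the argument is uniqueness of the differing coordinate, which is already forced by Hamming-adjacency together with $F_{j(v_1)} \subseteq v_1^{-1}(0) \setminus (v_2^{-1}(0) \cap F_{j(v_1)})$; the only mild care required is to remember that sums are taken modulo $k$, so that the expression $i_2 - i_1$ in part (2) is read in $\Z_k$.
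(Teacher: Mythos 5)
Your proof is correct and follows essentially the same route as the paper's: identify the unique differing coordinate $\ell_0$, show it lies in $F_{j(v_1)}$ because $F_{j(v_1)}\subseteq v_1^{-1}(0)$ while $F_{j(v_1)}\not\subseteq v_2^{-1}(0)$, and then read off the three items and the counting bound. The paper states items 1 and 3 more tersely, and you usefully note the vacuous case when $v_1$ has no neighbor in $Y$, but the argument is the same.
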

\begin{proof}
Items 1 and 3 follow from the fact that $v_1\in X$, $v_2\in Y$ and $v_1\sim v_2\,$. To see item 2, notice that $$i_2-i_1=\sum_{\ell=1}^n \left(v_2(\ell)-v_1(\ell)\right)=v_2(\ell_0)-0.$$
\end{proof}

\begin{proposition}   \label{prop:degreebound}  
For all $i_1,i_2\in\Z_k$,
$$\Delta(H(n,k)[X_{i_1}\cup Y_{i_2}])\leq \lceil\sqrt{n}\rceil.$$
\end{proposition}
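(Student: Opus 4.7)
The plan is to combine Propositions~\ref{prop:bipartite}, \ref{prop:tech2}, and \ref{prop:tech3} with the size bounds on the partition $[n]=F_1\overset{\cdot}{\cup}\cdots\overset{\cdot}{\cup}F_q$. By Proposition~\ref{prop:bipartite}, the subgraph $H(n,k)[X_{i_1}\cup Y_{i_2}]$ is bipartite with parts $X_{i_1}$ and $Y_{i_2}$, so every edge runs between these two parts. Thus it suffices to bound separately the number of neighbors in $Y_{i_2}$ of any $v_1\in X_{i_1}$, and the number of neighbors in $X_{i_1}$ of any $v_2\in Y_{i_2}$.

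For a vertex $v_1\in X_{i_1}$, Proposition~\ref{prop:tech3} already gives a bound of $\max_{j\in[q]}|F_j|$ on the number of neighbors in $Y_{i_2}$. For a vertex $v_2\in Y_{i_2}$, Proposition~\ref{prop:tech2} gives a bound of $q$ on the number of neighbors in $X$, hence certainly on the number in $X_{i_1}\subseteq X$.

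It then remains to observe that both $q$ and $\max_j |F_j|$ are at most $\lceil \sqrt{n}\rceil$. This is immediate from the defining property of our fixed partition: $|q-\sqrt{n}|<1$ and $\bigl||F_j|-\sqrt{n}\bigr|<1$ for each $j$, so $q<\sqrt{n}+1$ and $|F_j|<\sqrt{n}+1$; since $q$ and each $|F_j|$ are integers, they are at most $\lceil\sqrt{n}\rceil$. Combining the two degree bounds yields $\Delta(H(n,k)[X_{i_1}\cup Y_{i_2}])\leq \lceil\sqrt{n}\rceil$.

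There is no real obstacle here: the substantive work has been absorbed into the two preceding technical propositions, and the remaining step is purely bookkeeping about the sizes $q$ and $|F_j|$. The only point to be slightly careful about is handling the bipartition: one must not forget that a neighbor of $v_1\in X_{i_1}$ inside $X_{i_1}\cup Y_{i_2}$ must lie in $Y_{i_2}$ (since $X_{i_1}$ is independent by Proposition~\ref{prop:bipartite}), and symmetrically for $v_2\in Y_{i_2}$, which is why Propositions~\ref{prop:tech2} and \ref{prop:tech3} apply.
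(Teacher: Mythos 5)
Your proof is correct and follows exactly the same route as the paper: use Proposition~\ref{prop:bipartite} to reduce to counting cross-edges, bound a vertex of $X_{i_1}$ by $\max_j|F_j|$ via Proposition~\ref{prop:tech3} and a vertex of $Y_{i_2}$ by $q$ via Proposition~\ref{prop:tech2}, and finish with the size bounds on the fixed partition. The paper's one-line proof is simply a compressed version of the same argument.
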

\begin{proof}Combining Propositions~\ref{prop:bipartite},~\ref{prop:tech2} and~\ref{prop:tech3}, we obtain that
\begin{equation}
   \Delta(H(n,k)[X_{i_1}\cup Y_{i_2}])\leq \max\left\{\max_{j\in[q]}|F_j|,q\right\}\leq \lceil\sqrt{n}\rceil.
\end{equation}
\end{proof}


\subsection{Size greater than the independence number}

We now prove that there exist $i_1,i_2\in\Z_k$ such that the induced subgraph $H(n,k)[X_{i_1}\cup Y_{i_2}]$ has size greater than $ \alpha(H(n,k))$.

\begin{lemma}\label{lem:congruence}\quad
$|X|\equiv (-1)^{q+1}\pmod k$. In particular, $|X|$ is not divisible by $k$.
\end{lemma}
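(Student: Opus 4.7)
The plan is to count $|X|$ by inclusion-exclusion and then reduce modulo $k$, taking advantage of the fact that all but one term has a factor of $k$.

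First I would introduce, for each $j \in [q]$, the set
\[
A_j := \{v \in \Z_k^n : F_j \subseteq v^{-1}(0)\},
\]
so that $X = \bigcup_{j=1}^q A_j$. For any nonempty $S \subseteq [q]$, the intersection $\bigcap_{j \in S} A_j$ consists exactly of those $v \in \Z_k^n$ that vanish on $\bigcup_{j \in S} F_j$. Since the $F_j$ are pairwise disjoint, $|\bigcup_{j \in S} F_j| = \sum_{j \in S} |F_j|$, so
\[
\left|\bigcap_{j \in S} A_j\right| = k^{n - \sum_{j \in S}|F_j|}.
\]

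Next, by inclusion-exclusion,
\[
|X| = \sum_{\emptyset \neq S \subseteq [q]} (-1)^{|S|+1} k^{n - \sum_{j \in S}|F_j|}.
\]
The key observation is that whenever $S \subsetneq [q]$, we are omitting at least one block $F_j$, which has size $|F_j| \geq 1$, so the exponent $n - \sum_{j \in S}|F_j| \geq 1$, and that term is divisible by $k$. The only surviving term modulo $k$ is $S = [q]$, for which the exponent is $n - n = 0$ and the sign is $(-1)^{q+1}$. Hence
\[
|X| \equiv (-1)^{q+1} \pmod{k},
\]
which is in particular coprime to $k$ (equivalently, nonzero mod $k$, since $\pm 1 \not\equiv 0 \pmod k$ for $k \geq 2$; the case $k = 1$ is trivial since then $H(n,k)$ is a single vertex).

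I do not anticipate a real obstacle here: the proof is essentially the remark that the inclusion-exclusion formula for $|X|$ collapses mod $k$ to its top term because every proper sub-union of the $F_j$'s leaves at least one free coordinate contributing a factor of $k$. The only thing to be careful about is to invoke the disjointness of the $F_j$'s when computing $|\bigcap_{j \in S} A_j|$, and to note that $|F_j| \geq 1$ for every $j$ so that strictness of $S \subsetneq [q]$ really does guarantee a positive free exponent.
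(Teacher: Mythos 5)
Your proof is correct and is essentially the same inclusion-exclusion argument as the paper's: expand $|X|$ via inclusion-exclusion over the events $F_j \subseteq v^{-1}(0)$ and note that every term except the top one ($S = [q]$) carries a positive power of $k$. You merely spell out a couple of details the paper leaves implicit, namely the disjointness of the $F_j$ (needed to compute the intersection sizes) and $|F_j|\ge 1$ (needed to guarantee the nonzero exponent for proper $S$).
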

\begin{proof}The proof generalizes the inclusion-exclusion argument of Chung et al. \cite{Chung}. By the inclusion-exclusion principle, we have
$$|X|=\sum_{j\in[q]}k^{n-|F_j|}-\sum_{\substack{j_1,j_2\in[q]\\j_1< j_2}}k^{n-|F_{j_1}\cup F_{j_2}|}+\dots+(-1)^q\sum_{\substack{j_1,\dots,j_{q-1}\in[q]\\j_1<\dots<j_{q-1}}}k^{n-|F_{j_1}\cup\dots\cup F_{j_{q-1}}|}+(-1)^{q+1}.$$
Since all terms but the last are divisible by $k$, the statement follows.
\end{proof}

\begin{proposition}   \label{prop:large}  
There exist $i_1,i_2\in\Z_k$ such that $H(n,k)[X_{i_1}\cup Y_{i_2}]$ has size greater than $ \alpha(H(n,k))$.
\end{proposition}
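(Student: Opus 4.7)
The plan is to choose $i_1$ and $i_2$ to be maximizers of $|X_{i_1}|$ and $|Y_{i_2}|$ respectively, and use a pigeonhole/averaging argument together with Lemma~\ref{lem:congruence} to show that the sum of these maxima strictly exceeds $k^{n-1}=\alpha(H(n,k))$. Since $X_{i_1}\subseteq X$ and $Y_{i_2}\subseteq Y$ are disjoint by Definition~\ref{def:partition1}, the size of the induced subgraph $H(n,k)[X_{i_1}\cup Y_{i_2}]$ is exactly $|X_{i_1}|+|Y_{i_2}|$, so this will suffice.

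First I would record the total count $|X|+|Y|=k^n$, which is immediate from $\Z_k^n=X\,\dot\cup\,Y$. Next, using Definition~\ref{def:partition2}, the $k$ sets $X_0,\dots,X_{k-1}$ partition $X$, so an averaging argument gives
\[
k\cdot\max_{i\in\Z_k}|X_i|\;\geq\;\sum_{i\in\Z_k}|X_i|\;=\;|X|,
\]
with equality iff all $|X_i|$ are equal, in which case $k$ would divide $|X|$. By Lemma~\ref{lem:congruence}, $|X|$ is not divisible by $k$, so the inequality is strict; since both sides are integers, $k\cdot\max_i|X_i|\geq|X|+1$. The analogous averaging argument on $Y_0,\dots,Y_{k-1}$ yields $k\cdot\max_j|Y_j|\geq|Y|$.

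Adding these two inequalities and dividing by $k$ gives
\[
\max_{i\in\Z_k}|X_i|+\max_{j\in\Z_k}|Y_j|\;\geq\;\frac{|X|+|Y|+1}{k}\;=\;\frac{k^n+1}{k}\;>\;k^{n-1}.
\]
Since the left-hand side is an integer, it is at least $k^{n-1}+1$. Picking $i_1,i_2\in\Z_k$ that realize the two maxima and using the disjointness of $X_{i_1}$ and $Y_{i_2}$ completes the proof.

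There is no real obstacle here once Lemma~\ref{lem:congruence} is in hand; the only subtle point is the strictness of the averaging step, which is precisely where the non-divisibility of $|X|$ by $k$ is used. This mirrors the role played by the inclusion-exclusion count in Chung et al.~\cite{Chung} in the $k=2$ case.
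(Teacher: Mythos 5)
Your proposal is correct and follows essentially the same approach as the paper: take $i_1,i_2$ to maximize $|X_{i_1}|$ and $|Y_{i_2}|$, use averaging over the $k$-part partitions of $X$ and $Y$, and invoke Lemma~\ref{lem:congruence} to make the averaging bound on $|X_{i_1}|$ strict. The only cosmetic difference is that you phrase the strictness via integrality ($k\cdot\max_i|X_i|\geq|X|+1$), whereas the paper simply writes $|X_{i_1}|>\frac{1}{k}\sum_i|X_i|$; both are the same observation.
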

\begin{proof}
Choose $i_1, i_2\in\Z_k$ such that $\displaystyle |X_{i_1}|=\max_{i\in\Z_k}|X_i|$ and $\displaystyle |Y_{i_2}|=\max_{i\in\Z_k}|Y_i|$. By Lemma~\ref{lem:congruence}, it follows that $$ |X_{i_1}|>\frac{1}{k}\sum_{i\in\Z_k}|X_{i}|.$$
Since also
$$|Y_{i_2}|\geq\frac{1}{k}\sum_{i\in\Z_k}|Y_{i}|,$$
we have
$$|X_{i_1}\cup Y_{i_2}|=|X_{i_1}|+|Y_{i_2}|>\frac{1}{k}\left (\sum_{i\in\Z_k}|X_{i}|+\sum_{i\in\Z_k}|Y_{i}|\right)=\frac{1}{k}(|X|+|Y|)=\frac{1}{k}|\Z_k^n|=k^{n-1}=\alpha(H(n,k)).$$
\end{proof}

\begin{proof}[Proof of Theorem~\ref{thm:main}] 
Combine Propositions~\ref{prop:degreebound} and~\ref{prop:large}.
\end{proof}

\begin{rem}Readers familiar with Chung et al. \cite{Chung} will notice that our proof closely follows the steps of that paper. The challenge for us was to find the right construction of the family $\cal A$ of induced subgraphs that permits an extension of the analysis given in \cite{Chung}. We were surprised to find that the resulting bound, $\lceil\sqrt{n}\rceil$, does not depend on $k$.

\end{rem}


\section{Open questions on the lower bound} 

It is natural to ask whether $\sqrt{n}$ is the true order of magnitude of  $f(H(n,k))$ for $k\geq 3$. Huang \cite{Huang} proved the lower bound $f(H(n,2))\geq\sqrt{n}$ using 
linear algebra. We were unable to generalize his argument to Hamming graphs 
with larger alphabets. Unfortunately we were unable even to obtain a lower bound of logarithmic order like the Chung et al. lower bound for $k=2$. We were able to generalize one of their lemmas toward this lower bound; this result might be of interest in its own right.

\begin{lemma}  \label{lem:partial}  
If $S=(V(S),E(S))$ is a subgraph of $H(n,k)$ with average degree $\overline{d}$, then $\log|V(S)|\geq \frac{\overline{d}}{k-1}$.
\end{lemma}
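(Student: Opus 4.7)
My plan is to generalize the cube case by an entropy argument, establishing the isoperimetric-type bound $2|E(S)| \leq (k-1)\,|V(S)|\log|V(S)|$ from which the lemma follows immediately on dividing by $|V(S)|$. Let $m = |V(S)|$ and let $v$ be a uniformly random vertex of $V(S)$, so $H(v) = \log m$. For each coordinate $\ell \in [n]$, let $v^{(\ell)}$ denote the projection of $v$ onto $[n]\setminus\{\ell\}$ and let $f_x^\ell = |\{u \in V(S) : u^{(\ell)} = x\}| \in \{0,1,\ldots,k\}$ be the direction-$\ell$ fibre sizes indexed by $x \in \Z_k^{n-1}$.

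The proof rests on two observations. First, the edges of $S$ in direction $\ell$ are exactly the pairs of vertices sharing a direction-$\ell$ fibre, so
\[
 2|E(S)| \;=\; \sum_{\ell=1}^n \sum_{x} f_x^\ell\,(f_x^\ell - 1).
\]
Second, conditional on $v^{(\ell)} = x$, the coordinate $v_\ell$ is uniform on the $f_x^\ell$ values present in the fibre, so $H(v_\ell \mid v^{(\ell)}) = \frac{1}{m}\sum_{x} f_x^\ell \log f_x^\ell$. Shearer's lemma (equivalently Han's inequality) applied to the family $\{[n]\setminus\{\ell\}\}_{\ell \in [n]}$, in which each coordinate is covered $n-1$ times, gives $\sum_\ell H(v^{(\ell)}) \geq (n-1)H(v)$; combining with the chain rule $H(v_\ell \mid v^{(\ell)}) = H(v) - H(v^{(\ell)})$ and summing over $\ell$ yields
\[
 \sum_{\ell=1}^n \sum_{x} f_x^\ell \log f_x^\ell \;\leq\; m\log m.
\]

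To link the two displays I would then verify the termwise inequality $f(f-1) \leq (k-1)\,f\log f$ for integer $f \in \{1,\ldots,k\}$ and $k \geq 2$, which rearranges to $(f-1)/\log f \leq k-1$. By monotonicity of $g(f) = (f-1)/\log f$ on $[2,\infty)$ (a short calculus check amounting to $\ln f > 1 - 1/f$ for $f \geq 2$), it suffices to check the boundary case $f = k$, where $(k-1)/\log k \leq k-1$ since $\log k \geq 1$. Combining yields $2|E(S)| \leq (k-1)\,m\log m$, hence $\overline{d} \leq (k-1)\log m$, as required. I expect the only place needing genuine attention to be this elementary termwise comparison; the main content is the entropy step, which is the natural generalization of the edge-count inequality for $Q^n$ underlying the argument of Chung et al.
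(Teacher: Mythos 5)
Your proof is correct, and it takes a genuinely different route from the paper's. The paper's proof is an induction on $n$: it partitions $V(S)$ according to the value of the first coordinate into nonempty parts $V_1,\dots,V_K$ ordered by size, bounds the number of cross-edges out of $V_i$ by $(k-1)|V_i|$ (since a vertex has at most $k-1$ neighbours differing only in coordinate $1$), applies the inductive hypothesis inside each $G[V_i]$, and finishes by repeatedly applying the elementary inequality $(p+q)\log(p+q)\geq p\log p + q\log q + 2p$ for $0<p\leq q$ to reassemble the parts. Your argument instead fixes $n$ and sums over all $n$ directions: the edge count decomposes across directional fibres, and Shearer's/Han's inequality delivers $\sum_\ell\sum_x f_x^\ell\log f_x^\ell\leq m\log m$ in one stroke, after which the whole dependence on the alphabet size $k$ is isolated in the termwise comparison $f(f-1)\leq(k-1)f\log f$ for integers $0\leq f\leq k$ (this is precisely where the fibre bound $f\leq k$ enters). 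The entropy route is cleaner and more symmetric --- it avoids the reordering of parts and the chained application of the convexity lemma --- at the cost of invoking Shearer; the paper's route is self-contained and elementary. Two small precision points you should tidy up: for a non-induced subgraph $S$ the first display holds with $\leq$ rather than $=$ (which is all you need), and the termwise inequality should be checked at $f\in\{0,1\}$ directly (both sides vanish) before dividing by $f\log f$.
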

\begin{proof}The statement is clear for $n=1$. Inductively, split $\Z_k^n$ into $k$ copies of $\Z_k^{n-1}$: $$\Z_k^n=Z_1\overset{\cdot}{\cup}\dots \overset{\cdot}{\cup} Z_k,$$$$Z_i=\{v\in\Z_k^n:v(1)=i\}.$$ For every $i\in[k]$ set $V_i=V(S)\cap Z_i$. By throwing away the empty $V_i$ and permuting the indices, we may assume $|V_1|\leq \dots\leq |V_{K} |$ with all $V_i$ nonempty.

Let $s_1$ be the number of edges between $V_{1}$ and $\bigcup_{j=2}^{K}V_j$ in $G$, $s_2$ the number of edges between $V_2$ and $\bigcup_{j=3}^{K}V_j$ in $G$, $\dots$, $s_{K-1}$ the number of edges between $V_{K-1}$ and $V_{K}$ in $G$. This gives $|V_i|\geq \frac{s_i}{k-1}$ for all $i\in[K-1]$.

The inductive hypothesis gives
$$|V_i|\log|V_i|\geq  \frac{1}{k-1} \left(\sum_{v\in V_i}\deg_{G[V_i]}(v)\right),$$
so that
\begin{equation}
    \sum_{i=1}^{K}|V_i|\log|V_i|+\frac{2}{k-1}\left (\sum_{i=1}^{K-1}s_i\right )\geq \frac{1}{k-1}\left(\sum_{v\in V}\deg_{G}(v)\right).
\end{equation}
Notice that for $0< p\leq q$, we always have
\begin{equation}
(p+q)\log(p+q)\geq p\log p+q\log q+2p.
\end{equation}
Combining (7) and (8) gives
$$(|V_1|+\dots+|V_K|)\log(|V_1|+\dots+|V_K|)\geq |V_{1}|\log|V_{1}|+(|V_{2}|+\dots+|V_K|)\log(|V_{2}|+\dots+|V_K|)+2|V_1|$$
$$\geq \dots\geq \sum_{i=1}^K|V_i|\log|V_i|+2\sum_{i=1}^{K-1}|V_i|\geq \sum_{i=1}^K|V_i|\log|V_i|+\frac{2}{k-1}\left (\sum_{i=1}^{K-1}s_i\right)\geq \frac{1}{k-1} \left(\sum_{v\in V_i}\deg_{G[V_i]}(v)\right).$$
\end{proof}

\bigskip\tab\\   
\textbf{Acknowledgements.} This work was done while the author was an undergraduate at the University of Chicago.  I wish to thank Professor L\'aszl\'o Babai for suggesting that I work on this problem and discussing it with me.



\begin{thebibliography}{5}
\bibitem{Chung} Fan R. K. Chung, Zolt\'an F\"uredi, Ronald L. Graham, Paul Seymour: On induced subgraphs of the cube. \textit{J. Combinatorial Theory, Series A}, 49(1): 180-187, 1988. [\href{https://www.sciencedirect.com/science/article/pii/0097316588900349}{doi:10.1016/0097-3165(88)90034-9}]

\bibitem{Gotsman} Craig Gotsman, Nathan Linial: The equivalence of two problems on the cube. \textit{J. Combinatorial Theory, Series A}, 61(1): 142-146, 1992. [\href{https://www.sciencedirect.com/science/article/pii/0097316592900608}{doi:10.1016/0097-3165(92)90060-8}]

\bibitem{Nisan} Noam Nisan, Mario Szegedy: On the degree of Boolean functions as real polynomials. \textit{Comput. Complexity}, 4(4): 301-313, 1994. [\href{https://link.springer.com/article/10.1007/BF01263419}{doi:10.1007/BF01263419}]

\bibitem{Hatami} Pooya Hatami, Raghav Kulkarni, Denis Pankratov: Variations on the Sensitivity Conjecture. \textit{Theory of Computing Library Graduate Surveys}, 4: 1-27, 2011. [\href{http://www.theoryofcomputing.org/articles/gs004/}{doi:10.4086/toc.gs.2011.004}][\href{https://arxiv.org/abs/1011.0354}{arXiv:1011.0354}]


\bibitem{Huang} Hao Huang: Induced subgraphs of hypercubes and a proof of the Sensitivity Conjecture. \textit{Annals of Mathematics}, 190(3): 949-955, 2019. [\href{https://www.jstor.org/stable/10.4007/annals.2019.190.3.6?seq=1}{doi:10.4007/annals.2019.190.3.6}][\href{https://arxiv.org/abs/1907.00847}{arXiv: 1907.00847}]
\end{thebibliography}
\end{document}